\newtheoremstyle{boldplain}% name
{9pt}%      Space above
{9pt}%      Space below
{\itshape}%         Body font
{}%         Indent amount (empty = no indent, \parindent = para indent)
{\bfseries}% Thm head font
{.}%        Punctuation after thm head
{.5em}%     Space after thm head: " " = normal interword space;
\newtheoremstyle{bolddefinition}% name
{9pt}%      Space above
{9pt}%      Space below
{}%         Body font
{}%         Indent amount (empty = no indent, \parindent = para indent)
{\bfseries}% Thm head font
{.}%        Punctuation after thm head
{.5em}%     Space after thm head: " " = normal interword space;
\theoremstyle{boldplain}
\newtheorem{lemma}[equation]{Lemma}
\newtheorem{proposition}[equation]{Proposition}
\newtheorem{theorem}[equation]{Theorem}
\theoremstyle{bolddefinition}
\newfont{\bigbf}{cmbx10 scaled\magstep1}
\numberwithin{equation}{section}
\newcommand{\pihalf}{\frac{\pi}{2}}
\DeclareMathOperator{\diam}{diam}
\DeclareMathOperator{\CAT}{CAT}
\begin{document}

\title{Convexity is a local property in $\CAT(\kappa)$ spaces}
\author{Carlos Ramos-Cuevas\footnote{
cramos@mathematik.uni-muenchen.de}}
%\subjclass[2010]{51K10 Primary}
%\date{April 6, 2013}

\maketitle

\begin{abstract}
In this note we show that a connected, closed and
locally convex subset 
(with an extra assumption on the diameter with respect to
the induced length metric if $\kappa>0$)
of a $\CAT(\kappa)$ space is convex.
\end{abstract}

\section{Introduction}

The study of convex subsets of geodesic metric spaces is a very
natural and interesting geometric question. In this note we are 
interested in metric spaces with curvature bounded from above in
the sense of Alexandrov, that is, in $\CAT(\kappa)$ spaces.
Convex subsets play a special role in the study of the 
geometry of such spaces.

For instance, in \cite{KleinerLeeb:invconvex} 
Kleiner and Leeb study convex subsets of symmetric spaces
of noncompact type (which are examples of
simply connected 
Riemannian manifolds
of nonpositive curvature, in particular, $\CAT(0)$ spaces)
and their boundaries at infinity 
under the actions of certain groups of isometries.
These boundaries at infinity endowed with the Tits metric are 
examples of spherical buildings, which are in turn
$\CAT(1)$ spaces. 
Motivated by their analysis in \cite{KleinerLeeb:invconvex}
the authors ask if the circumradius of a convex
subset of a spherical building must be $\leq \pihalf$ or it
is itself a spherical building.
A weaker version of this question is Tits' Center Conjecture, 
which is concerned with
fixed points of groups of isometries acting on convex
subcomplexes of spherical buildings 
(see \cite{MuehlherrTits:centerconj}, \cite{LeebRamos-Cuevas:centerconj}
\cite{Ramos-Cuevas:centerconj}, \cite{MuehlherrWeiss:centerconj}
for a complete answer to this conjecture, 
we refer also to
\cite{BalserLytchak:centerbuild} for a related result).

In the case of $\CAT(0)$ spaces there is a {\em folklore} question
asking if the closed convex hull of a finite subset in a $\CAT(0)$ space
is compact. 
Clearly, the interesting case is when the space is not locally compact.
See e.g.\ mathoverflow.net/questions/6627/ for
a discussion on this problem.

If we regard unbounded subsets instead of finite subsets, we can ask now
how the asymptotic geometry is affected by taking the convex hull.
In \cite{HummelLangSchroeder:convexhulls} they give 
an example of a subset $S$ of a $\CAT(-1)$ space, such that 
the boundary at infinity of the convex hull $CH(S)$ is bigger than
the boundary at infinity of $S$. 
In contrast, they prove that if $S$ is the union
of finitely many convex subsets, then its convex hull has the same
boundary at infinity as $S$. This result is not true in general in the 
$\CAT(0)$ case (e.g.\ consider the Euclidean space ).

Consider the following related question. Which convex subsets $B$
of the ideal boundary (with the Tits metric) 
of a $\CAT(0)$ space $X$
can be realized as ideal boundaries of convex subsets of $X$?
In \cite{KleinerLeeb:invconvex} they address the case when $X$
is a symmetric space or a Euclidean building and $B\subset \partial_T X$
is a top-dimensional subbuilding. 
In \cite{Balser:convexrank1} Balser studies the case of Euclidean
buildings $X$ of type $A_2$ and
characterizes 0-dimensional convex subsets $B\subset \partial_T X$
arising as ideal boundaries of convex subsets of $X$. 

All this problems concern themselves in one way or another with
the constructions of convex subsets of $\CAT(\kappa)$ spaces.
It is, therefore, useful to have a tool to decide whether
a given subset is convex or not.
One can construct convex subsets of $\CAT(0)$ spaces by considering
points, geodesics, horoballs and taking tubular 
neighborhoods and intersections of them.
In \cite{Balser:convexrank1} the author also considers unions of these objects. 
In general
unions of convex sets will not be convex, hence, in order to verify that the
constructed subsets are convex, he proves and uses the following result.
Let $C\subset X$ be a closed connected subset of a $\CAT(0)$ space $X$ and suppose
that there is an $\varepsilon>0$ such that $C$ is $\varepsilon$-locally convex
(i.e.\ $B_p(\varepsilon)\cap C$ is convex for all $p\in C$), then $C$ is convex.
That is, one must verify the convexity of $C$ only locally.
One can actually drop the assumption that the $\varepsilon$ must be chosen 
uniformly in $C$ as is required in \cite{Balser:convexrank1}, 
this result is a consequence of the Cartan-Hadamard Theorem
\cite[Thm.\ 4.1]{BridsonHaefliger}.
Indeed, the assumption implies that $C$ is a locally $\CAT(0)$ space 
(in the restricted metric) and the
Cartan-Hadamard Theorem says that in this situation
the universal cover $\tilde C$ of $C$ is a 
$\CAT(0)$ space. Now we claim that $C=\tilde C$.
If there is a non null-homotopic loop in $C$, then its lift 
to $\tilde C$ is a non closed curve. Since $\tilde C$ is $\CAT(0)$, this
lift is homotopic to the unique geodesic segment joining its endpoints.
Projecting this homotopy
to $C$ shows that every non null-homotopic loop in $C$
is homotopic to a unique locally geodesic loop, but there are no 
locally geodesic loops in $X$, hence, $C$ is simply connected and a
$\CAT(0)$ space. In particular $C$ is a geodesic space and since 
geodesics in $X$ are unique we conclude that $C$ is convex.
(c.f.\ \cite[Sec.\ 24]{Gromov:catkspaces}, \cite{BuxWitzel:localconvex}.)

In the case of $\CAT(\kappa)$ spaces for $\kappa>0$ there is an analogous
result in \cite{Bowditch:locallycat1} to Cartan-Hadamard for 
locally compact spaces. It says that a locally compact, locally $\CAT(1)$
space is $\CAT(1)$ if and only if every loop of length $<2\pi$ can be homotoped
to a point by loops of length $<2\pi$ (this property being the analogous
of simply connected in the $\CAT(0)$ case).
One could use again 
this result as above to show that a closed, connected, locally convex
subset
(with an extra assumption on the diameter with respect
to the induced length metric, see Theorem~\ref{MainTheorem} below)
of a locally compact $\CAT(1)$ space is convex.
Another argument for the locally compact case using the Hopf-Rinow Theorem
can be found in \cite{BuxWitzel:localconvex}.
In this note we observe, 
that in order to show this result for $\CAT(\kappa)$
spaces, we do not need the full strength of the Cartan-Hadamard Theorem
and we can use instead the fact that our locally $\CAT(\kappa)$ space already
lives in an ambient $\CAT(\kappa)$ (and in particular, geodesic) space.
This allows us to give an unified proof for arbitrary $\kappa$ without the
assumption of local compactness. 

Let $D_\kappa$ be $\pi/\sqrt{\kappa}$ for $\kappa>0$ and 
$\infty$ for $\kappa\leq 0$ (see Section~\ref{Preliminaries}).
The main result of this note is:

\begin{theorem}\label{MainTheorem}
Let $C\subset X$ be a closed, connected subset of a $\CAT(\kappa)$ space 
$(X,d)$.
Suppose that for every point $p\in C$ there is an 
$\varepsilon=\varepsilon(p)>0$ such that
$B_p(\varepsilon)\cap C$ is convex,
that is, $C$ is locally convex.
Denote with $\ell$
the induced length metric in $C$. Then it holds:
\begin{enumerate}
\item  
If for two points $x,y\in C$ holds
$\ell(x,y)< D_\kappa$, then $xy\subset C$. In particular,
$\ell(x,y)\leq D_\kappa$ implies $\ell(x,y)=d(x,y)$.
\item
If the diameter $\diam_\ell(C)$ of $C$ with respect
to the length metric is $\leq D_\kappa$, then $C$ is a convex subset.
\item $(C,\ell)$ is a $\CAT(\kappa)$ space.
\end{enumerate}
\end{theorem}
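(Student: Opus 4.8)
The plan is to reduce the three assertions to a single one, namely: $(\ast)$ if $x,y\in C$ satisfy $\ell(x,y)<D_\kappa$, then the $X$-geodesic $xy$ --- unique, since $d(x,y)\le\ell(x,y)<D_\kappa$ --- lies in $C$. First I would dispose of the routine preliminaries about $(C,\ell)$: being locally convex, $C$ is locally path-connected by rectifiable arcs, so connectedness makes $\ell$ a genuine (finite) metric; and since $\ell$ and $d$ coincide on each convex ball $B_p(\varepsilon(p))\cap C$, the space $(C,\ell)$ is locally isometric to a convex subset of $X$. Granting $(\ast)$, the rest is quick. When $\ell(x,y)<D_\kappa$ the equality $\ell(x,y)=d(x,y)$ is immediate, since then $xy\subset C$ is an arc of $C$ of length $d(x,y)$, so that $\ell(x,y)\le d(x,y)$; in the boundary case $\ell(x,y)=D_\kappa$ one approximates $y$, along an arc of $C$ from $x$ to $y$ of length close to $D_\kappa$, by points $y_n$ with $\ell(x,y_n)<D_\kappa$ and passes to the limit. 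For part~(2), every pair $x,y\in C$ satisfies $\ell(x,y)\le\diam_\ell(C)\le D_\kappa$; the strict case is $(\ast)$, and in the boundary case one again chooses $y_n\to y$ with $\ell(x,y_n)<D_\kappa$, so that $xy_n\subset C$, and lets $n\to\infty$, using that the geodesics $xy_n$ all start at $x$ and hence, by $\CAT(\kappa)$ comparison, converge to a geodesic from $x$ to $y$, which therefore lies in the closed set $C$. For part~(3), any geodesic triangle of $(C,\ell)$ of perimeter $<2D_\kappa$ has all three sides of length $<D_\kappa$, hence by $(\ast)$ its sides are $X$-geodesics lying in $C$ along which $\ell$ and $d$ agree; so the triangle is an honest geodesic triangle of $X$ of perimeter $<2D_\kappa$, whose $\CAT(\kappa)$ comparison it inherits, while $(\ast)$ also makes $(C,\ell)$ a $D_\kappa$-geodesic space.

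To prove $(\ast)$, fix $x,y\in C$ with $\ell(x,y)<D_\kappa$ and choose a rectifiable arc $\gamma\colon[0,L]\to C$ parametrized by arclength, from $x$ to $y$, with $L<D_\kappa$ (one exists by the definition of $\ell$). Put $T:=\{t\in[0,L]:x\gamma(t)\subset C\}$, the geodesic $x\gamma(t)$ being unique since $d(x,\gamma(t))\le t<D_\kappa$. Then $T$ contains a neighbourhood of $0$ --- a short initial piece of $\gamma$ lies, together with $x$, in a single convex ball --- and $T$ is closed: if $t_n\to t$ with $x\gamma(t_n)\subset C$, then $x\gamma(t_n)\to x\gamma(t)$ by the continuous dependence of geodesics on their endpoints (valid at distances $<D_\kappa$), and $C$ is closed. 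Everything thus reduces to the openness of $T$, which I would isolate as a \emph{Key Lemma}: if $ab$ is an $X$-geodesic contained in $C$ with $d(a,b)<D_\kappa$, then there is $\delta>0$ such that $ab'\subset C$ for every $b'\in C$ with $d(b,b')<\delta$. Granting it, openness of $T$ is immediate: for $t_0\in T$, apply the Key Lemma with $a=x$ and $b=\gamma(t_0)$ to obtain $\delta>0$; then $x\gamma(t)\subset C$ whenever $\gamma(t)$ lies within $\delta$ of $\gamma(t_0)$, i.e.\ for all $t$ near $t_0$. Hence $T=[0,L]$ and $(\ast)$ holds.

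Proving the Key Lemma is the step I expect to be the main obstacle; it is the technical core of the note. I would induct on the least number $N$ of balls $B_p(\varepsilon(p))$, with $B_p(\varepsilon(p))\cap C$ convex, needed to cover the compact geodesic $ab$. The case $N=1$ is immediate: for $\delta$ small, $a$, $b$ and $b'$ all lie in a common ball $B_p(\varepsilon(p))$ with $B_p(\varepsilon(p))\cap C$ convex, so $ab'\subset C$. For the inductive step one cuts $ab$ at an interior point $m$ lying in the overlap of two of the balls, so that $ab$ is split into an initial piece covered by one ball and a terminal piece $mb$ covered by at most $N-1$ of them; the inductive hypothesis applied to $mb$ furnishes, for $b'$ close to $b$, a geodesic $mb'\subset C$, leaving a two-segment broken path $a\to m\to b'$ inside $C$ whose first segment $am$ is contained in a single convex ball. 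The genuinely delicate point --- essentially the only nontrivial point in the whole theorem --- is to \emph{straighten} this broken path within $C$, i.e.\ to show that splicing on the segment near $m$ cannot force $ab'$ out of $C$. Here the turning angle of the broken path at $m$ is close to $\pi$, because $b'$ is close to $b$ and $b$ lies straight ahead of $m$ along $ab$; so $\CAT(\kappa)$ comparison confines $ab'$ to an arbitrarily thin neighbourhood of the broken path, hence --- for $\delta$ small relative to the chosen covering of $ab$ --- to a fixed finite union of convex balls. One then propagates membership in $C$ along $ab'$ ball by ball, the crux being to produce in each successive ball an anchor point of $C$ lying on $ab'$ itself, using the points of $C$ supplied by $ab\subset C$ and by $mb'$ together with the comparison estimate. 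Once this is carried out, with the dependence of $\delta$ on $N$ and on the covering made explicit, the Key Lemma --- and with it Theorem~\ref{MainTheorem} --- is proved.
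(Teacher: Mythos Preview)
Your overall architecture matches the paper exactly: the reduction of (1)--(3) to $(\ast)$, the open/closed argument along a curve $\gamma$ of length $<D_\kappa$, and the identification of the Key Lemma (which is the paper's Lemma~\ref{mainlemma}, ``$C_x$ is open in $C$'') are all correct and essentially identical to what the author does. Your derivations of (1), (2) from $(\ast)$ are fine; for (3) you are slightly quick---the $\CAT(\kappa)$ inequality compares $\ell$-distances between points on \emph{different} sides of $\Delta$, so you still need to observe (as the paper does) that any two such points are joined by an arc of $\Delta$ of length $<D_\kappa$, whence $\ell=d$ there too.

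The genuine gap is in your proof of the Key Lemma. Your induction on the number $N$ of covering balls gets you to a broken path $a\to m\to b'$ inside $C$, but the ``straightening'' step is not actually carried out, and the sketch you give cannot work as stated. Knowing that $ab'$ lies in a thin tube around the broken path, hence in the union $B_1\cup\cdots\cup B_N$, does \emph{not} let you ``propagate membership ball by ball'': convexity of $B_i\cap C$ only helps once you already have two points of $C$ on $ab'$ inside $B_i$, and nothing in your data---points on $ab$ or on $mb'$---lies on $ab'$. Producing anchor points of $C$ \emph{on $ab'$ itself} is exactly the problem, and it cannot be solved by a finite chain-of-balls argument; one needs a limiting construction. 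The paper's proof supplies this: it inducts not on $N$ but on a distance scale (from $d$ to $3d/2$), and at each step runs a midpoint iteration $a_{n+1}=m(\bar x,b_n)$, $b_{n+1}=m(\bar y,a_n)$ entirely inside $C$ (each midpoint lies in $C$ by the inductive hypothesis at scale $d$). A contraction constant $K<1$ for midpoints of segments of length $\le\frac{2}{3}D_\kappa$---obtained by an explicit spherical-trigonometry computation---makes $\{a_n\},\{b_n\}$ Cauchy, and their limits assemble $\bar x\bar y$ as a local geodesic in $C$. This iterative contraction is the missing idea in your sketch; without it (or an equivalent device) the straightening step does not go through.
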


Observe that in the case $\kappa\leq 0$ the assumption in (2) 
on the diameter is always satisfied and $C$ is always a convex subset.
If $\kappa>0$ then it is not true in general that $d(x,y)=\ell(x,y)$, 
consider e.g.\ a segment of length $\pi+\epsilon$ on the unit circle,
it is locally convex but not convex. Nevertheless, it is certainly 
$\CAT(1)$ with respect to the length metric (cf. with the result 
in Section~\ref{sec:spheres}).

Theorem~\ref{MainTheorem}(2) was first shown in the case of Euclidean spaces 
by Tietze and Nakajima 
in \cite{Tietze:convex}, \cite{Nakajima:convex}. For locally compact
Busemann spaces it has been proven by Papadopoulos in \cite[Sec.\ 8.3]{Papadopoulos}
(we note that our proof of Theorem~\ref{MainTheorem}(1-2) also
works unchanged for general Busemann spaces).
Bux and Witzel proved the cases of $\CAT(0)$ and locally compact $\CAT(1)$
spaces in \cite{BuxWitzel:localconvex}.

In Section~\ref{sec:spheres} we use Theorem~\ref{MainTheorem}
to show that in the case of subsets of dimension $\geq 2$ of 
spherical buildings, we do not need the assumption on the diameter, thus,
connected, closed, locally convex subsets are always convex. 
The condition on the dimension is 
necessary as explained in the example above.

I would like to thank S.\ Witzel for pointing me out some missing references
and drawing my attention to their preprint \cite{BuxWitzel:localconvex}.

\section{Preliminaries}\label{Preliminaries}

In this section we will recall some definitions and fix the notation.
We refer to \cite{BridsonHaefliger} for more 
information on $\CAT(\kappa)$ spaces.

\subsection{Metric spaces}
Let $(X,d)$ be a metric space. For $r>0$ and $x\in X$
we denote with $B_x(r)$ the {\em open metric ball}
of radius $r$ centered at $x$.

A {\em geodesic} segment is a curve whose
length realizes the distance between its endpoints.
If there is a unique geodesic between two points $x,y\in X$
we denote it with $xy$.
$X$ is said to be a {\em geodesic metric space} if there exists
geodesics between any two points and it is {\em d-geodesic} if
this is true for any two points at distance $<d$.

The {\em link} $\Sigma_xX$ at a point $x\in X$ is the {\em space
of directions} of geodesic segments in $X$ emanating from $x$
equipped with the angle metric. If $x\neq y$ and there
is a unique geodesic between them, we denote with 
$\overrightarrow{xy}\in\Sigma_xX$
its direction at $x$.

A {\em midpoint} between $x,y\in X$ is a point $z$, such that
$d(x,z) = d(z,y) = \frac{1}{2}d(x,y)$. If the midpoint is unique
we denote it with $m(x,y)$.

The {\em length metric} associated to $d$ is defined as follows: 
the distance between two points is the infimum of the lengths
(measured with respect to $d$) of rectifiable curves connecting them.

\subsection{$\CAT(\kappa)$ spaces}
Let $M_\kappa$ be the complete, simply connected 
Riemannian surface of constant sectional curvature $\kappa$
and let $D_\kappa$ denote its diameter. Then 
$D_\kappa=\pi/\sqrt{\kappa}$ for $\kappa>0$ and 
$D_\kappa=\infty$ for $\kappa\leq 0$.

Recall that a complete $D_k$-geodesic metric space $(X,d)$ 
is said to be $\CAT(\kappa)$ if all geodesic triangles of perimeter
$<2D_\kappa$ are not thicker than the corresponding comparison
triangles on the model space $M_\kappa$.

In a $\CAT(\kappa)$ space, points at distance $<D_\kappa$
are joined by a unique geodesic, and these geodesics vary
continuously with their endpoints. A local geodesic of
length $<D_\kappa$ is a geodesic.

We say that a subset $C$ of a $\CAT(\kappa)$ is {\em convex} if for
every two points in $C$ at distance $<D_\kappa$ the unique 
geodesic segment between them is contained in $C$.
A closed convex subset of a $\CAT(\kappa)$ space 
is itself $\CAT(\kappa)$.

\section{Proof of the main result}

In this section all geometric objects like geodesics, metric balls, etc.\
will be taken with respect to the metric $d$ if not said otherwise.

First notice that the induced length metric $\ell$ in $C$ is finite. 
Indeed, the set of points that can be connected to a given point 
with a rectifiable curve in $C$ is clearly open and closed in $C$, 
because of the local convexity of $C$.

Let $x\in C$ and define $C_x$ as the subset of $C$ of points $y$
such that $d(x,y)<D_\kappa$ and the geodesic segment
$xy$ is contained in $C$. In particular $\ell(x,y)=d(x,y)$ for all
$y\in C_x$.

\begin{lemma}\label{mainlemma}
 $C_x$ is open in $C$.
\end{lemma}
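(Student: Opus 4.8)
The plan is to show that if $y\in C_x$, then a whole $C$-neighborhood of $y$ lies in $C_x$, by ``spreading'' the geodesic $xy$ slightly. Fix $y\in C_x$, so $d(x,y)<D_\kappa$ and $xy\subset C$. First I would use compactness of the segment $xy$ together with the local convexity hypothesis: for each $z\in xy$ there is $\varepsilon(z)>0$ with $B_z(\varepsilon(z))\cap C$ convex; covering $xy$ by finitely many such balls produces a uniform $\delta>0$ such that $B_z(\delta)\cap C$ is convex for every $z\in xy$ (take $\delta$ to be a Lebesgue number of the cover, halved if necessary so that the relevant balls stay inside one of the convex pieces). This uniform local convexity along the segment is the technical heart of the argument.

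Next I would pick a point $y'\in C$ with $d(y,y')$ small — small enough that $d(x,y')<D_\kappa$ still holds and that $y'\in B_y(\delta)\cap C$ — and argue that $xy'\subset C$, i.e.\ $y'\in C_x$. The idea is to subdivide $xy$ by points $x=z_0,z_1,\dots,z_N=y$ with consecutive distances $<\delta/2$, and to build a comparison/interpolation between the segments $xy$ and $xy'$. Concretely, since $X$ is $\CAT(\kappa)$ and all the points involved are at distance $<D_\kappa$, geodesics vary continuously with endpoints, so for $y'$ close to $y$ every point of $xy'$ is within $\delta/2$ of the corresponding point of $xy$ (by convexity of the $\CAT(\kappa)$ metric on balls of radius $<D_\kappa/2$, or directly by the continuity statement quoted in the Preliminaries). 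Hence $xy'$ is covered by the convex sets $B_{z_i}(\delta)\cap C$. Walking along the partition, one shows inductively that the sub-geodesic of $xy'$ from $x$ up to its point near $z_i$ lies in $C$: at each step the relevant short arc joins two points both lying in a single convex set $B_{z_i}(\delta)\cap C$, hence stays in $C$. Therefore $xy'\subset C$ and $y'\in C_x$.

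The main obstacle I anticipate is making the interpolation between $xy$ and $xy'$ rigorous while only using $C$-closeness (not $X$-closeness) of $y'$ to $y$ — that is, controlling that the nearby geodesic $xy'$ really is trapped in the union of the convex balls. The clean way is: choose the neighborhood of $y$ inside $C$ small enough (using continuity of geodesics in the ambient $\CAT(\kappa)$ space and the finitely many $\varepsilon(z_i)$) that the entire ambient geodesic $xy'$ lies in $\bigcup_i B_{z_i}(\delta)$; since $xy'$ also lies in $C$? — no, that is exactly what we are proving — so instead one argues step by step: the short piece of $xy'$ between consecutive ``fibers'' has both endpoints in $C$ (endpoints being points already shown to be in $C$, together with points of $xy'$ that coincide with points of $xy$ at the partition only for $i=0$; in general one uses that the $i$-th piece has one endpoint the previously-constructed point of $C$ and is short, hence contained in a convex $B_{z_i}(\delta)\cap C$ once we know its other endpoint lies in that ball). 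Getting this induction's bookkeeping right — i.e.\ that consecutive constructed points stay within $\delta$ of each other and within the right ball — is the delicate point; everything else follows from uniqueness and continuity of geodesics in $\CAT(\kappa)$ spaces at scale $<D_\kappa$. Since $C_x$ is clearly nonempty ($x\in C_x$) and we will have shown it open, this establishes Lemma~\ref{mainlemma}.
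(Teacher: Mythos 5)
There is a genuine gap, and it sits exactly at the point you flag as ``delicate''. Your induction walks along the ambient geodesic $xy'$ and, at each step, wants to apply convexity of $B_{z_i}(\delta)\cap C$ to the short sub-arc between consecutive points $w_i,w_{i+1}$ of $xy'$. But local convexity only says: if \emph{both} endpoints lie in $C$ (and in the ball), then the geodesic between them lies in $C$. Inductively you know $w_i\in C$, but $w_{i+1}$ is just a point of $X$ on the not-yet-controlled geodesic $xy'$; knowing it lies in the ball $B_{z_i}(\delta)$ is not enough, and there is no way to certify $w_{i+1}\in C$ without already knowing the arc $w_iw_{i+1}\subset C$ --- which is what you are trying to prove. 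So the induction cannot advance past its first step (and the same circularity defeats the variant ``let $s^*$ be the largest parameter with $xp_{s^*}\subset C$ and push a bit further''). Local convexity of $C$ constrains geodesics \emph{between points of $C$}; it gives no control whatsoever over ambient geodesics that merely pass close to $C$, which is why the naive ``spread the geodesic and chain through convex balls'' argument does not close up. Your preliminary step (uniform $\delta$ along $xy$ by compactness, continuity of geodesics in their endpoints) is fine; it is the propagation mechanism that is missing.

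The paper gets around precisely this obstruction by adapting the Cartan--Hadamard argument of Bridson--Haefliger (Lemma 4.3(1)): it proves, by induction on the \emph{scale} $d$ (passing from $d$ to $\min\{3d/2,D\}$ with carefully shrunken radii $\epsilon_{d'}=(1-K)\min\{\epsilon_d,\delta/3\}$), the statement that any two points of $C$ that are $\epsilon_d$-close to points of $xy$ at distance $\leq d$ are joined by a geodesic inside $C$. In the induction step one never takes a sub-arc of the unknown geodesic $\bar x\bar y$; instead one builds sequences $a_{n+1}=m(\bar x,b_n)$, $b_{n+1}=m(\bar y,a_n)$ of midpoints of geodesics whose endpoints are \emph{already known} to lie in $C$ (by the induction hypothesis at scale $d$), shows via the comparison estimate of Proposition~\ref{prop:sphericaltriangle} that these sequences are Cauchy, uses closedness of $C$ and completeness of $X$ to pass to limits $a,b\in C$, and concludes that the concatenation of $\bar xb$ and $a\bar y$ is a local geodesic of length $<D_\kappa$, hence equals $\bar x\bar y$ and lies in $C$. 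Some replacement of this kind (midpoint iteration, or a Hopf--Rinow/Bowditch argument in the locally compact case) is needed; your proposal as written does not supply it.
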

\begin{proof}
 Let $y\in C_x$ and write $D:=d(x,y)<D_\kappa$.
By local convexity, we can choose an $\varepsilon$ with
$D_\kappa/2>\varepsilon>0$ and
such that $B_{y'}(\varepsilon)\cap C$ is convex for all $y'\in xy$.
We follow the proof of \cite[Lemma~4.3(1)]{BridsonHaefliger}
and modify it to work in our setting. The proof is some kind of induction.
Consider the following statement in the real number $d\geq 0$:

\medskip
{\em There exists a positive number $\epsilon_d>0$ depending only on 
$d\leq D$ such that if $x',y'\in xy\subset C$ and $d(x',y')\leq d$,
then for any $\bar x\in B_{x'}(\epsilon_d)\cap C$
and $\bar y\in B_{y'}(\epsilon_d)\cap C$
holds $d(\bar x,\bar y)<D_\kappa$ and the geodesic
$\bar x\bar y$ lies in $C$.}

\medskip
Notice that if the statement is true for some $d$, then it is true
for any $d'<d$. If it is true for $D$ then we are done: 
the open set
$B_y(\epsilon_D)\cap C$ is then contained in $C_x$.

{\em Induction basis:} 
If $d\leq \varepsilon/2$ then the statement is true: Take 
$\epsilon_d:=\varepsilon/2$. Then any $\bar x, \bar y$ as in the statement
lie in $B_{x'}(\varepsilon)\cap C$, which is convex by assumption.
Further, $d(\bar x, \bar y)\leq 3\varepsilon/2<3D_\kappa/4$.

{\em Induction step:}
Now assume that the statement is true for some $d\leq D$, then
we want to show, that it is true for $d':=\min \{3d/2,D\}$.
Let $\delta:=D_\kappa-D>0$.

By triangle comparison and Proposition~\ref{prop:sphericaltriangle},
there exists a constant $K=K(D)<1$ such that if $x,y,z\in X$
are the vertices of a geodesic triangle in $X$ with perimeter $<2D_\kappa$
and $d(x,y),d(x,z)\leq \frac{2}{3}(D+\delta/2)< \frac{2}{3}D_\kappa$
then $d(m(x,y),m(x,z))\leq Kd(y,z)$.

Let $\epsilon_{d'}:=(1-K)\min\{\epsilon_d,\delta/3\}$.
If $x',y',\bar x,\bar y\in C$ are points as in the statement, then 
$d(\bar x,\bar y)\leq d(x',y') + 2\epsilon_{d'}
\leq D + 2(1-K)\delta/3 < D + 2\delta/3 < D_\kappa$.
This shows the first conclusion of the statement.

Let $a_0,b_0\in x'y'\subset xy$ be two points
such that $d(x',a_0)=d(a_0,b_0)=d(b_0,y')$, that is, $a_0,b_0$ 
cut the segment $x'y'$ in thirds.
Since $d(x',b_0)=\frac{2}{3}d(x',y')\leq \frac{2}{3}d'\leq d$
and $\bar x\in B_{x'}(\epsilon_{d'})\cap C\subset B_{x'}(\epsilon_d)\cap C$,
by the induction hypothesis it follows that $\bar xb_0\subset C$.
Analogously, $a_0\bar y\subset C$. 
We define inductively $a_{n+1}:=m(\bar x, b_n)$ and 
$b_{n+1}:=m(\bar y, a_n)$ (see Figure~\ref{fig1}). 
That they are well defined 
(i.e.\ $d(\bar x, b_n), d(\bar y, a_n) < D_\kappa$) is a consequence 
of (iii) below. We want to prove the following.
\begin{enumerate}
 \item[(i)] $d(a_{n-1},a_n),d(b_{n-1},b_n)\leq K^n\epsilon_{d'}$.
 \item[(ii)] $\bar x b_n, \bar y a_n \subset C$.
 \item[(iii)] $d(\bar x, b_n),d(\bar y, a_n)\leq \frac{2}{3}(D+\delta/2)$.
\end{enumerate}
\begin{figure}\begin{center}
\includegraphics[scale=0.6]{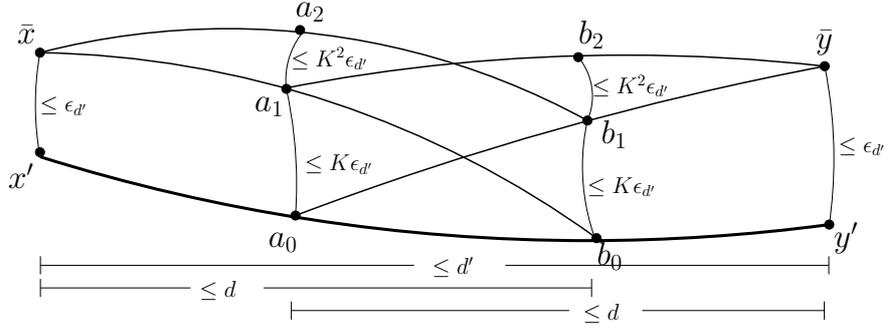}
\caption{Construction of the sequences $\{a_n\},\{b_n\}$}
\label{fig1}
\end{center}
\end{figure}
This would finish the proof of the induction step. Indeed, 
by (i) the sequences $\{a_n\},\{b_n\}$ are Cauchy and
because
$X$ is complete and $C$ is closed, there are points $a,b\in C$
with $a_n\rightarrow a$ and $b_n\rightarrow b$.
The geodesics $\bar x b_n, \bar y a_n \subset C$ converge to geodesics
$\bar x b, \bar y a \subset C$, because geodesics of length $<D_\kappa$
in $CAT(\kappa)$ spaces vary continuously with their endpoints.
The midpoint of the geodesic $\bar x b$ is $a$
and the midpoint of the geodesic $a\bar y$ is $b$, hence,
the union of $\bar x b$ and $a\bar y$ is a local geodesic between
$\bar x$ and $\bar y$. This local geodesic must coincide with
$\bar x\bar y$ because 
it has length $d(\bar x, a)+d(a,\bar y)=
\frac{1}{2}d(\bar x, b)+d(a,\bar y)\leq 
\frac{1}{3}(D+\delta/2)+\frac{2}{3}(D+\delta/2)<D_\kappa$, by (iii).
It follows that 
$\bar x\bar y\subset C$.

(ii) follows from (i) and the induction hypothesis for $d$:
We have that
$d(x',b_0),\linebreak d(a_0,y')\leq \frac{2}{3}d'\leq d$ and 
$\bar x\in B_{x'}(\epsilon_{d'})\cap C\subset B_{x'}(\epsilon_d)\cap C$,
respectively,
$\bar y\in B_{y'}(\epsilon_{d'})\cap C\subset B_{y'}(\epsilon_d)\cap C$.
By (i), it holds 
$d(a_0,a_n)\leq \sum_{i=1}^n K^i\epsilon_{d'} < \epsilon_{d'}/(1-K)
\leq \epsilon_d$. 
Analogously, $d(b_0,b_n) < \epsilon_d$.
Thus, if $a_n,b_n\in C$ then by the induction hypothesis,
$\bar x b_n, \bar y a_n \subset C$, which in turn implies
that $a_{n+1},b_{n+1}\in C$. Now (ii) follows inductively.

It remains to prove (i) and (iii). 
Suppose (i) is true for all $n\leq m$. 
Then for $n\leq m$ holds
$d(\bar y, a_n)\leq d(\bar y, y')+d(y',a_0)+
\sum\limits_{i=1}^n d(a_{i-1},a_i)\leq \epsilon_{d'}+\frac{2}{3}d'
+\sum\limits_{i=1}^n K^i\epsilon_{d'} 
\leq \frac{2}{3}d' + \frac{\epsilon_{d'}}{1-K}
\leq \frac{2}{3}(D+\frac{\delta}{2})$.
Analogously, $d(\bar x, b_n)\leq \frac{2}{3}(D+\frac{\delta}{2})$.
Thus, (iii) is true for all $n\leq m$.
Consider the triangle $(\bar y, a_{m-1},a_m)$.
Then, by the definition of the constant $K$ above 
(given by Proposition~\ref{prop:sphericaltriangle}), 
it follows that
$d(b_m,b_{m+1})=d(m(\bar y, a_{m-1}),m(\bar y, a_m)) 
\leq Kd(a_{m-1},a_m)\leq K^{m+1}\epsilon_{d'}$.
Analogously, $d(a_m,a_{m+1})\leq K^{m+1}\epsilon_{d'}$.
Hence, (i) is true for all $n\leq m+1$.
Now (i) and (iii) follow again inductively.
\end{proof}

Let now $y\in C$ such that $\ell(x,y)< D_\kappa$. 
Let $\gamma$ be a curve in $C$ connecting $x$ and $y$
of length $L(\gamma)$ with
$\ell(x,y) \leq L(\gamma) <D_\kappa$.
By Lemma~\ref{mainlemma}, the set
 $C_x\cap\gamma$ is open in $\gamma$.
It is also closed because $L(\gamma)<D_\kappa$ and geodesics 
of length $<D_\kappa$ 
in $\CAT(\kappa)$ spaces vary continuously with their endpoints.
Hence, $y\in C_x$ and $xy\subset C$. This proves the first assertion
of (1).

If $\ell(x,y)=D_\kappa$, then we can choose points 
$y_n\in C$ with $\ell(x,y_n)<D_\kappa$ and $\ell(y_n,y)\rightarrow 0$.
The second assertion of (1) follows by 
the first part and continuity.
(2) is a direct consequence of (1).

To show (3), first notice that by (1), $(C,\ell)$ is $D_\kappa$-geodesic.
Let $x_0,x_1,x_2\in C$ be the vertices of a geodesic 
(with respect to $\ell$)
triangle $\Delta$ of perimeter $<2D_\kappa$. In particular, each side
of the triangle has length $<D_\kappa$. Then by (1) it follows
that $\ell(x_i,x_j)=d(x_i,x_j)$ and $x_ix_j\subset C$. Hence
$\Delta$ is also a geodesic triangle with respect to $d$.
Let $a,b$ be two points on the sides of $\Delta$, they
split the triangle in two curves in $C$ and at least
one of them has length
$<D_\kappa$. This implies that $\ell(a,b)<D_\kappa$ and therefore
$\ell(a,b)=d(a,b)$. That is, $(\Delta,\ell)$ is isometric to
$(\Delta,d)$. The $\CAT(\kappa)$ property for $(C,\ell)$
follows from the $\CAT(\kappa)$ property of $(X,d)$.
\qed

\section{Locally convex subsets of spherical buildings}
\label{sec:spheres}

We apply now Theorem~\ref{MainTheorem} in the case of spherical buildings
and obtain following result.

\begin{theorem}\label{prop:sphere}
 Let $C$ be a connected, closed, locally convex subset of a 
spherical building
$B$. Suppose that $\dim(C)\geq 2$, then $C$ is convex.
\end{theorem}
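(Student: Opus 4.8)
The plan is to prove directly that $C$ is convex. A spherical building is a complete $\CAT(1)$ space, so here $D_\kappa=\pi$ and $\diam_d(B)\le\pi$; and a closed convex subset is automatically $\CAT(1)$ in the induced metric, so proving convexity of $C$ is all that is needed. If $C$ is a single point there is nothing to prove, so assume not; then $\dim C\ge 2$ forces $\dim B\ge 2$, hence every link $\Sigma_z B$ ($z\in B$) is a spherical building of dimension $\ge 1$. I will use two standard features of a spherical building $E$ of dimension $\ge1$: (a) $E$ is $\pi$-geodesic; and (b) for each $p\in E$ the set $\{q\in E:d(p,q)=\pi\}$ meets every open simplex in at most one point, hence is totally disconnected. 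Both apply to $B$ and to the links $\Sigma_zB$.

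Fix $x_0\in C$ and put
\[
C_{x_0}:=\{y\in C:\ d(x_0,y)<\pi\ \text{and}\ x_0y\subset C\},\qquad A_{x_0}:=\{y\in C:\ d(x_0,y)=\pi\},
\]
so that $C\setminus A_{x_0}=\{y\in C:d(x_0,y)<\pi\}$. By Lemma~\ref{mainlemma}, $C_{x_0}$ is open in $C$, and it is relatively closed in $C\setminus A_{x_0}$ because geodesics of length $<\pi$ depend continuously on their endpoints and $C$ is closed. Since $x_0\in C_{x_0}$, as soon as $C\setminus A_{x_0}$ is known to be connected we get $C_{x_0}=C\setminus A_{x_0}$, i.e.\ $x_0y\subset C$ for all $y\in C$ with $d(x_0,y)<\pi$; as $x_0$ was arbitrary this is exactly the convexity of $C$ (and then Theorem~\ref{MainTheorem} additionally yields $\ell=d$ and $\diam_\ell(C)\le\pi$). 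So the whole statement reduces to showing that $C\setminus A_{x_0}$ is connected for every $x_0\in C$.

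The idea is that $A_{x_0}$ is too thin to disconnect $C$: it is totally disconnected by (b), while $C$ is locally cone-like near it. I would first check that no $z\in A_{x_0}$ is a local cut point of $C$. Choose $\varepsilon\in(0,\pi/2)$ with $V:=B_z(\varepsilon)\cap C$ convex; then $V$ is convex of diameter $<\pi$, hence uniquely geodesic and star-shaped from $z$, and, since the space of directions of a convex set is convex, $\Sigma_z C$ is a convex subset of $\Sigma_z B$. Local convexity together with $\dim C\ge 2$ forces $\dim_z C\ge 2$ at every point: a point where $C$ were locally $1$-dimensional but adjacent to a $2$-dimensional part would have a non-convex link, since an isolated direction cannot be at distance $\pi$ from a whole positive-dimensional piece of $\Sigma_z C$ (antipodal sets in $\Sigma_z B$ are totally disconnected by (b)) — this is precisely the mechanism absent when $\dim C=1$, as in the $(\pi+\epsilon)$-arc in $S^1$. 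Granting $\dim_z C\ge 2$, $\Sigma_z C$ is connected: if it had two components, a point $p$ of one of them would have the other contained in $\{q\in\Sigma_zB:d(p,q)=\pi\}$, a totally disconnected set, so that component would be a single point, and by symmetry all components of $\Sigma_z C$ would be points, contradicting $\dim\Sigma_zC=\dim_zC-1\ge1$. Finally, $\Sigma_z C$ connected implies $V\setminus\{z\}$ connected: any $w,w'\in V$ with $\angle_z(w,w')<\pi$ are joined by the geodesic $ww'\subset V$ missing $z$, and arbitrary directions in the connected set $\Sigma_z C$ can be joined by a chain of such angle-$<\pi$ steps.

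It then remains to pass from ``no point of $A_{x_0}$ is a local cut point'' to ``$A_{x_0}$ does not disconnect $C$''. The plan: given a partition $C\setminus A_{x_0}=U\sqcup U'$ into nonempty sets open in $C$, each $z\in A_{x_0}$ has a small convex neighborhood $V$ with $V\cap(C\setminus A_{x_0})$ connected (using the total disconnectedness of $A_{x_0}$ on top of the previous paragraph), hence entirely inside $U$ or inside $U'$; sorting the points of $A_{x_0}$ accordingly turns $U,U'$ into a partition of $C$ into two open sets, forcing one of them to be empty. I expect this last step — rigorously controlling the local structure of $C$ around the totally disconnected set $A_{x_0}$, \emph{without} any local-compactness assumption, so as to keep every small convex piece of $C$ connected after removing $A_{x_0}$ — to be the main obstacle, and it is exactly here that the two-dimensionality of $C$ and the cone-like local structure coming from local convexity are indispensable.
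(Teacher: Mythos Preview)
Your overall strategy shares the paper's core idea: the antipodal set of a fixed point is ``thin'', and because $\dim C\geq 2$ a single point can be removed from a small convex neighbourhood without disconnecting it. The paper, however, organises the argument along a curve rather than globally. Assuming $\ell(x,y)>\pi$, it takes a rectifiable curve $\gamma\subset C$ from $x$ to $y$, observes via Theorem~\ref{MainTheorem}(1) that $\gamma$ must meet antipodes of $x$, and then invokes the building-theoretic fact that \emph{points of a fixed type have only finitely many possible mutual distances}. This makes the antipodal set $A_x$ uniformly discrete, so $\gamma$ meets only finitely many antipodes $\hat x_1,\dots,\hat x_k$; one then reroutes $\gamma$ around each $\hat x_i$ inside a small convex ball, using exactly the $(\overline{B_{\hat x_i}(r)}\cap C)\setminus\{\hat x_i\}$ connectivity that your link argument supplies. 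The rerouted curve avoids all antipodes of $x$, contradicting the first observation; hence $\diam_\ell(C)\leq\pi$ and Theorem~\ref{MainTheorem}(2) finishes.

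The obstacle you flag in your last step is genuine if all you know is that $A_{x_0}$ is totally disconnected. Without local compactness there is no general principle preventing a totally disconnected closed set from separating, and your claim that a small convex $V$ has $V\cap(C\setminus A_{x_0})$ connected really needs $V\cap A_{x_0}=\{z\}$, not merely $V\cap A_{x_0}$ totally disconnected. Relatedly, your justification of (b) --- ``meets every open simplex in at most one point, hence totally disconnected'' --- is not by itself enough: that combinatorial statement does not obviously bound distances from below. What closes the gap, and what the paper uses, is precisely the stronger fact above: since antipodes of $x_0$ all have the same type and same-type distances take finitely many values, $A_{x_0}$ is \emph{uniformly discrete}. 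With this in hand your scheme works immediately (choose $V$ of radius smaller than half the minimal positive same-type distance, so $V\cap A_{x_0}=\{z\}$ and $V\setminus A_{x_0}=V\setminus\{z\}$), and the paper's curve-based variant sidesteps the clopen-partition manoeuvre entirely by reducing to a finite rerouting.
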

\begin{proof}
Suppose there are two points $x,y\in C$ with $\ell(x,y)>\pi$ 
and let $\gamma$ be a curve in $C$ connecting them. 
There must be a point $\hat x\neq y$ in $\gamma$ with $\ell(x,\hat x)=\pi$ and 
therefore $d(x,\hat x)=\pi$ by Theorem~\ref{MainTheorem}. That is, the curve
$\gamma$ contains antipodes of $x$.
The antipodes of $x$ have the same type (i.e.\ the same image in the model Weyl chamber
under the natural projection). Recall that the distances between points of the same type 
in a spherical building
have finitely many different possible values. This implies that $\gamma$ contains
only finitely many antipodes $\hat x_1,\dots,\hat x_k$
of $x$. We can also assume that $\gamma$ meets each one of these
antipodes only once.

The next step is to observe that since $\dim(C)\geq 2$ we can change the curve $\gamma$
to avoid each one of the $\hat x_1,\dots,\hat x_k$ and obtain a curve 
connecting $x$ and $y$ that does not meet
any antipode of $x$. 
Let us make this observation more precise.
Let $r>0$ be such that $\overline{B_{\hat x_1}(r)}$ does not contain any antipode of $x$ and 
$\overline{B_{\hat x_1}(r)}\cap C$ is convex.
Then $(\overline{B_{\hat x_1}(r)}\cap C)\setminus \{\hat x_1\} $ is connected.
Let $x_1, y_1\in C$ be te first, respectively the last, point of $\gamma$
in $\overline{B_{\hat x_1}(r)}\cap C$. Hence we can replace the part of $\gamma$
between $x_1$ and $y_1$ with a curve in 
$(\overline{B_{\hat x_1}(r)}\cap C)\setminus \{\hat x_1\}$.
Repeating this procedure, we obtain a curve in $C$ connecting $x$ and $y$
avoiding any antipodes of $x$. This contradicts our first observation
at the beginning of the proof. It follows that $\diam_\ell(C)\leq\pi$ and 
Theorem~\ref{MainTheorem} implies that $C$ is convex.
\end{proof}

\appendix
\section{A computation in spherical trigonometry}

\begin{proposition}\label{prop:sphericaltriangle}
Consider a triangle in the 2-dimensional unit sphere $S^2$ with 
vertices $x,y,z$ and sides lengths $a=d(y,z),b=d(x,z),c=d(x,y)$.
Let $y',z'$ be the midpoints of the segments 
$xy$ and $xz$ respectively
and let $c'=d(y',z')$.
Suppose that $a,b\leq C < \frac{2\pi}{3}$.
Then there exists a constant $K=K(C)< 1$, such that $c'\leq Kc$.
\end{proposition}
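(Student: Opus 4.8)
The plan is to work entirely inside the two geodesic triangles that share the vertex $x$, since both bisected sides $xy$ and $xz$ emanate from $x$. Write $\alpha$ for the angle of the triangle $xyz$ at $x$. Because $y'$ and $z'$ lie on $xy$ and $xz$ at distances $\tfrac{c}{2}$ and $\tfrac{b}{2}$ from $x$, the triangle $xy'z'$ has the same angle $\alpha$ at $x$ with its two adjacent sides halved, so the spherical law of cosines in the two triangles reads
\begin{align*}
\cos a &= \cos b\,\cos c+\sin b\,\sin c\,\cos\alpha,\\
\cos c' &= \cos\tfrac{b}{2}\,\cos\tfrac{c}{2}+\sin\tfrac{b}{2}\,\sin\tfrac{c}{2}\,\cos\alpha.
\end{align*}
The goal is to bound the midsegment $c'$ by the side $a=d(y,z)$ opposite the shared vertex $x$; this is exactly the estimate $d(m(x,y),m(x,z))\le K\,d(y,z)$ invoked in Lemma~\ref{mainlemma}, applied with the two sides $b,c$ through $x$ controlled by $C<\tfrac{2\pi}{3}$.

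First I would prove the pointwise inequality $c'<a$ for every nondegenerate triangle. Subtracting the two identities gives $\cos c'-\cos a=P-Q\cos\alpha$, with $P=\cos\tfrac{b}{2}\cos\tfrac{c}{2}-\cos b\cos c$ and $Q=\sin b\sin c-\sin\tfrac{b}{2}\sin\tfrac{c}{2}$. A short computation rewrites $Q=\sin\tfrac{b}{2}\sin\tfrac{c}{2}\bigl(4\cos\tfrac{b}{2}\cos\tfrac{c}{2}-1\bigr)$, which is strictly positive because $\cos\tfrac{b}{2},\cos\tfrac{c}{2}>\cos\tfrac{C}{2}>\tfrac12$ once $b,c\le C<\tfrac{2\pi}{3}$. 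Hence the affine function $\alpha\mapsto P-Q\cos\alpha$ is minimized at $\alpha=0$, where it equals $P-Q=\cos\tfrac{b-c}{2}-\cos(b-c)$. The arithmetic heart of the matter is the factorization
\[
\cos\tfrac{s}{2}-\cos s=\bigl(1-\cos\tfrac{s}{2}\bigr)\bigl(1+2\cos\tfrac{s}{2}\bigr),
\]
which is strictly positive for $0<s<\tfrac{4\pi}{3}$; applied with $s=|b-c|$ it shows $P-Q\ge0$, with equality only if $b=c$ (where $a=0$ as well). Therefore, for every $\alpha>0$, $\cos c'-\cos a=P-Q\cos\alpha>P-Q\ge0$, i.e. $\cos c'>\cos a$; since $a,c'\in[0,\pi]$ and $\cos$ is strictly decreasing there, $c'<a$. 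The threshold $\tfrac{2\pi}{3}$ enters precisely through the positivity of $Q$.

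Finally I would pass from this pointwise bound to a uniform constant by a compactness argument on the region $\{(b,c,\alpha):0\le b,c\le C,\ 0\le\alpha\le\pi\}$. On the portion where $a$ is bounded below, this region is compact and $c'/a$ is continuous and $<1$, so its supremum there is attained and strictly less than $1$. It remains to control $c'/a$ near the degenerate locus $a=0$, that is, as $y\to z$: writing the map $q\mapsto m(x,q)$ in geodesic polar coordinates about $x$ as $(\rho,\theta)\mapsto(\rho/2,\theta)$, its differential has radial factor $\tfrac12$ and transverse factor $\tfrac{\sin(\rho/2)}{\sin\rho}=\tfrac{1}{2\cos(\rho/2)}$, so $\limsup_{y\to z}c'/a$ is at most the operator norm $\tfrac{1}{2\cos(\rho/2)}\le\tfrac{1}{2\cos(C/2)}<1$, again using $\cos\tfrac{C}{2}>\tfrac12$. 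Combining the two bounds produces a constant $K=K(C)<1$ with $c'\le K\,a$. The main obstacle I expect is not the pointwise inequality but this last uniformization: one must exclude $c'/a\to1$, and the computation shows that the only approach to $1$ is to let $C\nearrow\tfrac{2\pi}{3}$, which is exactly what the hypothesis forbids.
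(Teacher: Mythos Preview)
Your argument is correct, and you have also correctly diagnosed that the inequality actually needed in Lemma~\ref{mainlemma}---and actually established by the paper's proof---is $c'\le K\,d(y,z)$ under a bound on the two sides through $x$; the labeling in the proposition statement is inconsistent with this (as literally stated, with $c=d(x,y)$ unconstrained, the ratio $c'/c$ is unbounded: let $c\to0$ with $b$ fixed). The paper's own proof tacitly makes the same relabeling, treating $a,b$ as the halved sides and $c$ as the opposite one.

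Your route, however, is genuinely different from the paper's. The paper eliminates the angle to obtain the closed identity
\[
4\cos\tfrac{a}{2}\cos\tfrac{b}{2}\,\cos c'=\cos c-1+2\bigl(\cos^2\tfrac{a}{2}+\cos^2\tfrac{b}{2}\bigr)
\]
(in its internal labeling), handles the symmetric case $a=b$ exactly via $\sin(c'/2)=\tfrac{1}{2\cos(a/2)}\sin(c/2)$, and then reduces the general case to this one by a monotonicity computation: with the longer side and $c$ fixed, $c'$ is nonincreasing as the shorter side shrinks. This is a purely trigonometric reduction. You instead prove the strict pointwise inequality $c'<a$ directly and then upgrade it to a uniform constant by compactness, controlling the degenerate limit $a\to0$ through the operator norm of the differential of $q\mapsto m(x,q)$, which again yields exactly $1/(2\cos(C/2))$. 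Both arguments isolate the threshold $C<\tfrac{2\pi}{3}$ through $\cos\tfrac{C}{2}>\tfrac12$. Your approach is more geometric and makes the source of the bound transparent; the paper's is more self-contained and avoids the soft compactness step. The only place worth tightening is the uniformity near $a=0$: rather than a pointwise limsup at each $z$, observe that the map $q\mapsto m(x,q)$ is globally $1/(2\cos(C/2))$-Lipschitz on $\{q:d(x,q)\le C\}$, which gives the required neighborhood bound at once.
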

\begin{proof}
Let $\theta:=\angle_x(y,z)$ be the angle in $x$, 
then the spherical law of cosines for the sides $c$ and $c'$ imply
\begin{align*}
(\cos\frac{a}{2}\cos\frac{b}{2}) \cos c'  &=
(\cos\frac{a}{2}\cos\frac{b}{2})(\cos\frac{a}{2}\cos\frac{b}{2} + 
      \sin\frac{ a}{2}\sin\frac{ b}{2}\cos\theta)\\
&=    \cos^2\frac{a}{2}\cos^2\frac{b}{2} + 
      \frac{\sin a}{2}\frac{\sin b}{2}\cos\theta \\
&= \cos^2\frac{a}{2}\cos^2\frac{b}{2} + \frac{1}{4}(\cos c-\cos a\cos b) \\
&= \frac{1}{4}(4\cos^2\frac{a}{2}\cos^2\frac{b}{2} +\cos c -
    (2\cos^2\frac{a}{2} - 1)(2\cos^2\frac{b}{2} - 1)) \\
&= \frac{1}{4}(\cos c -1 +2(\cos^2\frac{a}{2} + \cos^2\frac{b}{2})).
\end{align*}
Suppose first that $a=b$ and write $\alpha := \cos\frac{a}{2}$. Then we have
$4\alpha^2(\cos c' -1) = \cos c -1$, 
or equivalently, $\sin^2\frac{c'}{2} = \frac{1}{4\alpha^2}\sin^2\frac{c}{2}$.
Notice that $\frac{1}{4\alpha^2} \leq \frac{1}{4\cos^2 C/2} <1$.
Since $c,c' < \pi$, this implies that there is a constant $K=K(C)< 1$
such that $c'\leq Kc$ proving the proposition in this case.

Now we may assume w.l.o.g.\ that $b\leq a$.
Let $\beta := \cos\frac{b}{2} \geq \cos\frac{a}{2} = \alpha > \frac{1}{2}$.
We fix $a,c$ and consider $c'$ as a function of $b$, or equivalently,
as a function $f(\beta):=c'$ of $\beta$.
We can derive the equation above with respect to $\beta$ and obtain
$$
\alpha(\cos f(\beta)-\beta f'(\beta) \sin f(\beta)) = \beta.
$$
Hence $f'(\beta)\leq 0$ if and only if $\alpha\cos f(\beta)-\beta\leq 0$. 
The latter can be seen easily using again the equation above
for $\cos c'$:
\begin{align*}
 \alpha\cos f(\beta)-\beta &= \frac{4\alpha\beta\cos f(\beta)-4\beta^2}{4\beta}
    = \frac{\cos c-1 + 2(\alpha^2+\beta^2)-4\beta^2}{4\beta}\\
  &= \frac{(\cos c -1) + 2(\alpha^2 - \beta^2)}{4\beta}\leq 0.
\end{align*}
Thus, $f$ is monotone non-increasing and in particular,
$c' = f(\beta) \leq f(\alpha) \leq Kc$ by the first case considered above.
\end{proof}

\bibliography{../MyBibliography}
\bibliographystyle{alpha}
\end{document}